\newtheorem{theorem}{Theorem}[section]
\numberwithin{equation}{section}
\let\expandafter\oldproof\csname\string\proof\endcsname
\let\oldendproof\endproof
\renewenvironment{proof}[1][\proofname]{%
  \oldproof[\bf #1]%
}{\oldendproof}
\def\Z{\mathbb Z}
\def\esssup{\operatornamewithlimits{ess\,\sup}}
\begin{document}

\title{The two-weight Hardy inequality: a new elementary and universal proof}

\author{Amiran Gogatishvili and Lubo\v s Pick}

\email[A.~Gogatishvili]{gogatish@math.cas.cz}
\urladdr{0000-0003-3459-0355}
\email[L.~Pick]{pick@karlin.mff.cuni.cz}
\urladdr{0000-0002-3584-1454}

\address{Amiran Gogatishvili,
 Institute of Mathematics of the
 Czech Academy of Sciences,
 \v Zitn\'a~25,
 115~67 Praha~1,
 Czech Republic \newline
 L. N. Gumilyov Eurasian National University,
5 Munaytpasov St., 010008 Nur-Sultan, Kazakhstan}

\address{Lubo\v s Pick,
 Department of Mathematical Analysis,
	Faculty of Mathematics and Physics,
	Charles University,
	Sokolovsk\'a~83,
	186~75 Praha~8,
	Czech Republic}
	
\subjclass[2010]{26D10}
\keywords{Two-weight Hardy inequality, universal and elementary proof}

\thanks{This research was supported by the grant P201-18-00580S of the Czech Science Foundation. The research of  A. Gogatishvili was also supported by Czech Academy of Sciences RVO: 67985840}

\begin{abstract}
We give a~new proof of the known criteria for the inequality
    \begin{equation*}
        \left(\int_{0}^{\infty}\left(\int_{0}^{t}f\right)^{q}w(t)\,dt\right)^{\frac{1}{q}}
        \leq C
        \left(\int_{0}^{\infty}f^{p}v\right)^{\frac{1}{p}}.
\end{equation*}
The innovation is in the elementary nature of the proof and its versatility.
\end{abstract}

\date{\today}

\maketitle

\section{Introduction}

Consider the two-weight Hardy inequality
\begin{equation}\label{E:1}
    \left(\int_{0}^{\infty}\left(\int_{0}^{t}f\right)^{q}w(t)\,dt\right)^{\frac{1}{q}}
        \leq C
        \left(\int_{0}^{\infty}f^{p}v\right)^{\frac{1}{p}},
\end{equation}
in which $C$ is a~positive constant independent of a nonnegative measurable function $f$ on $(0,\infty)$, $v$ and $w$ are fixed nonnegative measurable functions on $(0,\infty)$ (weights), $p\in[1,\infty)$, and $q\in(0,\infty)$. The requirement $p\in[1,\infty)$ is reasonable since for $p\in(0,1)$ there are functions in weighted $L^p$ which are not locally integrable.

The problem of characterizing pairs of weights for which~\eqref{E:1} is true has a long and rich history and it would be impossible to mention here every contribution. For $p=q>1$, $v=1$,
$w(t)=t^{-q}$ and $C=p'$, it is just the boundedness of the integral averaging operator on $L^{p}(0,\infty)$, a result almost one century old, which appears in classical Hardy's papers in 1920's, see~\cite{HLP:88}. The beginning of investigation of a~general weighted case goes back to 1950's, and it starts with the paper by Kac and Krein~\cite{KacKre:58}  in which a characterization for $p=q=2$ and $v=1$ can be found. In 1950's and 1960's,
plenty of partial results were obtained by Beesack, see e.g.~\cite{Bee:61}. In late 1960's and in 1970's, a boom in the so-called \textit{convex case} ($p\le q$, named after the convexity of $t\mapsto t^{\frac{q}{p}}$) was seen. For $p=q$, a characterization was obtained by Tomaselli~\cite{Tom:69}, Talenti~\cite{Tal:69} and Muckenhoupt~\cite{Muc:72}. It was extended to $p\leq q$ by Bradley~\cite{Bra:78}, the same result is also stated without proof
in~\cite{Kok:79}. Many authors referred further to an untitled and unpublished manuscript by Artola, and in~\cite{Rie:74}, a paper by D.W. Boyd and J.A. Erd\H{o}s was quoted, which most likely was never published. In any case,~\eqref{E:1} holds if and only if
\begin{equation*}
    \sup_{t\in(0,\infty)}\left(\int_{t}^{\infty}w\right)^{\frac{1}{q}}
        \left(\int_{0}^{t}v^{1-p'}\right)^{\frac{1}{p'}}<\infty \quad\text{for $1<p\le q$}
\end{equation*}
and
\begin{equation*}
    \sup_{t\in(0,\infty)}\left(\int_{t}^{\infty}w\right)^{\frac{1}{q}}
        \esssup_{s\in(0,t)}\frac{1}{v(s)}<\infty \quad\text{for $1=p\le q$.}
\end{equation*}
Here and throughout, if $p\in(0,\infty]$, then $p'$ denotes the conjugate exponent defined by $\frac{1}{p}+\frac{1}{p'}=1$. Observe that $1$ and $\infty$ are conjugate exponents and that $p'$ is negative when $p\in(0,1)$.

The non-convex case ($p>q$) turned out to be more difficult to handle, and it had to wait till 1980's and 1990's for appropriate treatment. The first characterization, for $1\le q<p<\infty$, was obtained by Maz'ya and
Rozin, see~\cite{Maz:11}, who proved that a necessary and sufficient condition is
\begin{equation*}
    \int_{0}^{\infty}\left(\int_{t}^{\infty}w\right)^{\frac{r}{q}}\left(\int_{0}^{t}v^{1-p'}\right)^{\frac{r}{q'}}v(t)^{1-p'}\,dt<\infty,
\end{equation*}
where $r=\frac{pq}{p-q}$. A universal characterization, sheltering both the convex and the non-convex cases and involving more general norms was obtained by Sawyer~\cite{Saw:84}, but the condition in the non-convex case is expressed in terms of a
discretized condition. While discretization techniques proved later to be of colossal theoretical importance, conditions expressed in terms of discretizing sequences are difficult to verify. Later,
Sinnamon~\cite{Sin:91} characterized the inequality for $0<q<1<p<\infty$. The criterion turns out to be the same as that of Maz'ya and Rozin but the proof, based on Halperin's level function, is very
different. The case $0<q<p=1$ was treated by Sinnamon and Stepanov~\cite{SiSt:96}, who moreover observed that, unless $p=1$, Sinnamon's and Mazya-Rozin's
results can be proved in a unified manner. The case $p=1$, however, still required separate treatment. In~\cite{BeGr:06}, restriction of~\eqref{E:1} to the cone of non-increasing functions is studied, together with its discrete version. Some ideas developed there are useful also for the unrestricted case.

In this note we present a short, uniform and elementary proof, in which
\begin{itemize}
\item all cases are covered,
\item $p>1$ is not separated from $p=1$,
\item only Fubini's theorem, H\"older's inequality, Minkowski's integral inequality and Hardy's lemma are used.
\end{itemize}

\section{The theorem and its proof}

\begin{theorem}
Let $v,w$ be weights on $(0,\infty)$, $p\in[1,\infty)$ and $q\in(0,\infty)$. For $t\in(0,\infty)$, denote
\begin{equation*}
    V(t) =
        \begin{cases}
            \left(\int_{0}^{t}v^{1-p'}\right)^{\frac{1}{p'}} &\text{if $p\in(1,\infty)$,}
                \\
            \esssup_{s\in(0,t)}\frac{1}{v(s)} &\text{if $p=1$,}
        \end{cases}
\end{equation*}
and
\begin{equation*}
    W(t)=\int_{t}^{\infty}w.
\end{equation*}
Then there exists a positive constant $C$ such that~\eqref{E:1} holds for every nonnegative measurable function $f$ on $(0,\infty)$ if and only if $A<\infty$, where
\begin{equation*}
    A = \begin{cases}
        \sup\limits_{t\in(0,\infty)} V(t)W(t)^{\frac{1}{q}}
            & \text{if $p\le q$,}
                \\
        \int_{0}^{\infty}W^{\frac{p}{p-q}}\,dV^{\frac{pq}{p-q}}
            & \text{if $p>q$,}
        \end{cases}
\end{equation*}
in which the latter integral should be understood in the Lebesgue--Stieltjes sense with respect to the (monotone) function $V^{\frac{pq}{p-q}}$.
\end{theorem}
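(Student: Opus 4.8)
The plan is to split the statement into the two regimes $p\le q$ ("convex") and $p>q$ ("non-convex"), and within each to prove necessity by testing \eqref{E:1} on suitable functions and sufficiency by a direct estimate; throughout, $p=1$ is carried alongside $p>1$, replacing H\"older's inequality by the $\esssup$-estimate $\int_0^tf\le V(t)\int_0^tfv$ where needed. For necessity in the convex case I would put $f=v^{1-p'}\chi_{(0,t_0)}$ for a fixed $t_0$ (with the obvious limiting $\delta$-type choice when $p=1$): then $\int_0^sf$ is constant and equal to $V(t_0)^{p'}=\int_0^{t_0}v^{1-p'}$ for $s\ge t_0$, while $\int_0^\infty f^pv=V(t_0)^{p'}$, so after cancellation \eqref{E:1} collapses to $V(t_0)W(t_0)^{1/q}\le C$; taking the supremum gives $A\le C$. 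For necessity in the non-convex case I would first (for $p>1$) reduce to $v\equiv1$ by the substitution $f=v^{1-p'}\rho(V^{p'})$, under which $\int_0^tf=\int_0^{V(t)^{p'}}\rho$ and $\int_0^\infty f^pv=\int_0^\infty\rho^p$ (so \eqref{E:1} becomes the same inequality with $v=1$ and a pushed-forward $w$, and $A$ is preserved), and then test against the classical Maz'ya--Rozin function built from $W$, optimising over a cut-off; Fubini is used here to rewrite the left-hand side of \eqref{E:1} in a form where the test function can be read off, and the case $p=1$ is tested directly.

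For sufficiency in the convex case I would first dispose of $p=q$. With $U(s):=\int_0^sv^{1-p'}$ (so $U=V^{p'}$, $U'=v^{1-p'}$) and a parameter $\beta\in(0,1/p')$, a weighted H\"older inequality gives
\[
\int_0^tf=\int_0^t\bigl(fv^{1/p}U^{\beta}\bigr)\bigl(v^{-1/p}U^{-\beta}\bigr)
\le\Bigl(\int_0^tf^pvU^{\beta p}\Bigr)^{1/p}\Bigl(\int_0^tU'U^{-\beta p'}\Bigr)^{1/p'}
=\frac{\bigl(\int_0^tf^pvU^{\beta p}\bigr)^{1/p}\,U(t)^{(1-\beta p')/p'}}{(1-\beta p')^{1/p'}}.
\]
Raising to the $p$th power, integrating against $w$, and applying Fubini reduces the left-hand side of \eqref{E:1} to $\int_0^\infty f^pv\,U^{\beta p}\bigl(\int_s^\infty U^{(1-\beta p')p/p'}w\bigr)\,ds$; since $U^{(1-\beta p')p/p'}=V^{p(1-\beta p')}\le A^{p(1-\beta p')}W^{-(1-\beta p')}$ and $\int_s^\infty W^{\beta p'-1}(-dW)\le W(s)^{\beta p'}/(\beta p')$, the inner integral is $\le A^{p(1-\beta p')}W(s)^{\beta p'}/(\beta p')$, and finally $U(s)^{\beta p}W(s)^{\beta p'}=(V(s)^pW(s))^{\beta p'}\le A^{\beta pp'}$, so everything collapses to $C^p\int_0^\infty f^pv$ with $C^p=A^p/((1-\beta p')^{p/p'}\beta p')$, and optimising $\beta$ recovers the sharp constant. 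For $p<q$ (hence $q>1$) I would reduce to the $p=q$ case: plain H\"older gives $F(t)\le V(t)(\int_0^tf^pv)^{1/p}\le V(t)I^{1/p}$ with $I:=\int_0^\infty f^pv$, so $F(t)^q\le F(t)^p\,V(t)^{q-p}I^{(q-p)/p}$ and hence $\int_0^\infty F^qw\le I^{(q-p)/p}\int_0^\infty F^p\,(V^{q-p}w)$; the weight pair $(V^{q-p}w,v)$ meets the $p=q$ criterion because $\int_t^\infty V^{q-p}w\le A^{q-p}\int_t^\infty W^{p/q-1}(-dW)\le\frac qpA^{q-p}W(t)^{p/q}$, so $\sup_tV(t)(\int_t^\infty V^{q-p}w)^{1/p}\le(\frac qp)^{1/p}A^{q/p}$, and invoking the already-proved $p=q$ inequality closes the loop. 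The case $p=1\le q$ is entirely parallel, with $\int_0^tf\le V(t)\int_0^tfv$ and the bound $\esssup_s W(s)^{1/q}/v(s)\le A$ (from the definition of $V$ and continuity of $W$) doing the work after Fubini.

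The non-convex case $p>q$ is the technical heart of the theorem. No crude pointwise estimate suffices: the plain bound $\int_0^tf\le V(t)(\int_0^tf^pv)^{1/p}$ combined with Minkowski's inequality at the top level (when $q\ge1$) only proves \eqref{E:1} under the strictly stronger condition $\int_0^\infty W^{p'/q}\,dV^{p'}<\infty$, and for $q<1$ not even that is available. The plan is instead to use the \emph{weighted} H\"older estimate above with $\beta$ still free, obtaining $F(t)^q\le c_\beta^q\,\Psi(t)^{q/p}U(t)^{\gamma}$ with $\Psi(t)=\int_0^tf^pvU^{\beta p}$ and $\gamma=(1-\beta p')q/p'$; then to bound $\int_0^\infty F^qw$ by $c_\beta^q\int_0^\infty\Psi^{q/p}U^{\gamma}w$ and to split this integral by H\"older with the conjugate pair $p/q$ and $p/(p-q)$, inserting an auxiliary weight $g$; one then chooses $g$ (and $\beta$) so that, after Fubini, the first factor is $\lesssim(\int_0^\infty f^pv)^{q/p}$ and the second is $\lesssim A^{(p-q)/p}$, with Minkowski's integral inequality handling the exchange of the $L^q$-quasinorm with the inner integration and Hardy's lemma justifying the replacement of $F$ by a monotone majorant (the endpoint $p=1$, for which $0<q<1$, runs the same way with the $\esssup$-form of the estimates). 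I expect the main obstacle to be precisely this calibration: because $A$ is an integral and not a supremum, the estimates cannot be localised scale by scale without discretisation, and the "balanced" choices of $g$ tend to generate logarithmically divergent integrals of the form $\int dW/W$; threading between that divergence and the two target bounds, and disposing of the resulting boundary terms at $0$ and $\infty$ (via a preliminary truncation of $f$, and of the weights, to a compact subinterval), is where the real effort lies.
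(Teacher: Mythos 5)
Your convex-case argument is sound: the necessity test $f=v^{1-p'}\chi_{(0,t_0)}$ is the same as the paper's, and your sufficiency route (weighted H\"older with the free parameter $\beta$ plus Fubini for $p=q$, then the reduction of $p<q$ to $p=q$ via $F^q\le F^pV^{q-p}\bigl(\int_0^\infty f^pv\bigr)^{(q-p)/p}$ and the verification that the pair $(V^{q-p}w,v)$ satisfies the $p=q$ condition with constant $\lesssim A^{q/p}$) is correct and is a genuine alternative to the paper, which instead inserts the factor $V^{\varepsilon}$ and applies Minkowski's integral inequality once with exponent $q/p\ge1$ to cover all $p\le q$ at a stroke.

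The non-convex case, however, is not proved, and that is where the theorem lives. For sufficiency with $p>q$ you state a plan (H\"older with exponents $\frac pq,\frac{p}{p-q}$, an auxiliary weight $g$, Minkowski, Hardy's lemma, truncations) and then explicitly leave the ``calibration'' of $g$ and $\beta$, the feared logarithmic divergences, and the boundary terms unresolved; that calibration is precisely the content of the proof. The paper closes it concretely: starting from the same $\varepsilon$-weighted H\"older bound, it writes $V(t)^{(1-\varepsilon+\alpha)q}\approx\int_0^tV^{(1-\varepsilon+\alpha)q-\frac{pq}{p-q}}\,dV^{\frac{pq}{p-q}}$ and $V(t)^{-\alpha p}-V(\infty)^{-\alpha p}=\int_t^\infty d(-V^{-\alpha p})$, applies Fubini, one H\"older step with exponents $\frac pq,\frac p{p-q}$ against the measure $dV^{\frac{pq}{p-q}}$ (this is exactly where $A$ enters), Fubini again, and treats the leftover term carrying $V(\infty)^{-\alpha q}$ separately when $V(\infty)<\infty$; the free parameters $\varepsilon,\alpha$ keep every exponent off its critical value, so no divergence, no Minkowski, no truncation and no Hardy's lemma are needed there. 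Necessity for $p>q$ is likewise missing: for $p>1$ you only gesture at a change of variables and ``the classical Maz'ya--Rozin function,'' omitting both the actual test function (the paper uses $f=(\int_t^\infty W^{r/p}wV^{r-\theta p'})^{1/p}V^{(\theta-1)(p'-1)}v^{1-p'}$, deduces $B^{1/q}\lesssim B^{1/p}$, and passes to $A\approx B$ by parts, with an approximation argument because $B<\infty$ is not known a priori) and the justification of the substitution when $V$ has plateaus; and for $p=1$, $0<q<1$, ``tested directly'' cannot work: $V$ is an essential supremum, no single extremizer exists, and the paper needs the discretisation along the levels $\sigma^k<V\le\sigma^{k+1}$, the sets $G_k$ where $1/v>\sigma^k$, the choice of $\delta_k$ as in~\eqref{E:small-epsilons}, and Hardy's lemma applied to~\eqref{E:upper-estimate-of-integral} to reach $A<\infty$. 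As written, your proposal establishes only the convex half of the theorem.
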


\begin{proof}
\textit {Sufficiency.} Fix $\varepsilon\in(0, 1)$. We claim that, for every nonnegative measurable function $f$ on $(0,\infty)$, one has
\begin{equation}\label{E:3}
    \int_{0}^{t}f \lesssim
    \left(\int_{0}^{t}f^{p}V^{\varepsilon p}v\right)^{\frac{1}{p}}V(t)^{1-\varepsilon} \quad\text{for $t>0$.}
\end{equation}
(We write $\lesssim$ when the expression to the left of it is majorized by a constant times that on the right.) To show~\eqref{E:3}, fix $t\in(0,\infty)$. If $p\in(1,\infty)$, then, by H\"older's inequality,
\begin{align*}
    \int_{0}^{t}f & = \int_{0}^{t}fV^{\varepsilon}v^{\frac{1}{p}}V^{-\varepsilon}v^{-\frac{1}{p}}
        \le \left(\int_{0}^{t}f^{p}V^{\varepsilon p}v\right)^{\frac{1}{p}}
        \left(\int_{0}^{t}V^{-\varepsilon p'}v^{1-p'}\right)^{\frac{1}{p'}}.
\end{align*}
By a change of variables, we obtain
\begin{equation*}
    \int_{0}^{t}V^{-\varepsilon p'}v^{1-p'} = \int_{0}^{t} \left(\int_{0}^{s}v^{1-p'}\right)^{-\varepsilon}v^{1-p'}\,ds = \frac{1}{1-\varepsilon}\left(\int_{0}^{s}v^{1-p'}\right)^{1-\varepsilon}
    = \frac{1}{1-\varepsilon} V(t)^{(1-\varepsilon)p'},
\end{equation*}
hence
\begin{equation*}
    \int_{0}^{t}f \lesssim \left(\int_{0}^{t}f^{p}V^{\varepsilon p}v\right)^{\frac{1}{p}}V(t)^{1-\varepsilon},
\end{equation*}
and~\eqref{E:3} follows. If $p=1$, then we get~\eqref{E:3} from
\begin{equation*}
    \int_{0}^{t}f = \int_{0}^{t}fv^{-\varepsilon}vv^{-1+\varepsilon}
        \le \left(\int_{0}^{t}fV^{\varepsilon}v\right) V(t)^{1-\varepsilon}.
\end{equation*}

Let $p\le q$. Then $A<\infty$ implies $V\lesssim W^{-\frac{1}{q}}$. Using this and~\eqref{E:3}, we get
\begin{equation*}
    \int_{0}^{t}f \lesssim \left(\int_{0}^{t}f^{p}W^{-\frac{\varepsilon p}{q}}v\right)^{\frac{1}{p}}W(t)^{\frac{\varepsilon-1}{q}}\quad\text{for $t>0$.}
\end{equation*}
Raising to $q$ and integrating with respect to $w(t)\,dt$, we obtain
\begin{equation*}
    \int_{0}^{\infty} \left(\int_{0}^{t}f\right)^{q}w(t)\,dt
         \lesssim \int_{0}^{\infty} \left(\int_{0}^{t}f(s)^{p}W(s)^{-\frac{\varepsilon p}{q}}v(s)\,ds\right)^{\frac{q}{p}}W(t)^{\varepsilon-1}w(t)\,dt.
\end{equation*}
Next we apply Minkowski's integral inequality (note that $\frac{q}{p}\ge 1$ and all the expressions in the play are nonnegative) in the form
\begin{equation*}
    \int_{0}^{\infty}\left(\int_{0}^{\infty} F(s,t)\,d\mu_1(s)\right)^{\frac{q}{p}}\,d\mu_2(t)
        \le \left(\int_{0}^{\infty}\left(\int_{0}^{\infty}F(s,t)^{\frac{q}{p}}\,d\mu_2(t)\right)^{\frac{p}{q}}\,d\mu_1(s)\right)^{\frac{q}{p}},
\end{equation*}
in which $F(s,t)=\chi_{(0,t)}(s)f(s)^{p}$, $\chi$ denotes the characteristic function, $d\mu_1(s)=W(s)^{-\frac{\varepsilon p}{q}}v(s)ds$ and $d\mu_2(t)=W(t)^{\varepsilon-1}w(t)\,dt$. We thus obtain
\begin{align*}
        &\int_{0}^{\infty} \left(\int_{0}^{t}f(s)^{p}W(s)^{-\frac{\varepsilon p}{q}}v(s)\,ds\right)^{\frac{q}{p}}W(t)^{\varepsilon-1}w(t)\,dt
            \\
        & \le \left(\int_{0}^{\infty}f(s)^{p}W(s)^{-\frac{\varepsilon p}{q}}v(s)\left(\int_{s}^{\infty}W(t)^{\varepsilon-1}w(t)\,dt\right)^{\frac{p}{q}}\,ds\right)^{\frac{q}{p}}
            \\
        &\approx \left(\int_{0}^{\infty}f^{p}v\right)^{\frac{q}{p}}.
\end{align*}
 (We write $\approx$ when both $\lesssim$ and $\gtrsim$ apply.) Altogether, we arrive at
\begin{equation*}
    \int_{0}^{\infty} \left(\int_{0}^{t}f\right)^{q}w(t)\,dt \lesssim \left(\int_{0}^{\infty}f^{p}v\right)^{\frac{q}{p}},
\end{equation*}
and~\eqref{E:1} follows.

Let $p>q$. Fix $\alpha\in(0,\infty)$. We shall use the symbol $V(\infty)$ for $\lim_{t\to\infty} V(t)$ (this limit always exists, either finite or infinite, owing to the monotonicity of $V$). By~\eqref{E:3},
\begin{align*}
    \int_{0}^{\infty} & \left(\int_{0}^{t}f\right)^{q}w(t)\,dt
        \lesssim \int_{0}^{\infty}\left(\int_{0}^{t}f^{p}V^{\varepsilon p}v\right)^{\frac{q}{p}}V(t)^{-\alpha q }V(t)^{(1-\varepsilon+\alpha)q }w(t)\,dt
            \\
        & \lesssim \int_{0}^{\infty}\left(\int_{0}^{t}f^{p}V^{\varepsilon p}v\right)^{\frac{q}{p}}
          \left(V(t)^{-\alpha p}-V(\infty)^{-\alpha p}\right)^{\frac{q}{p}}V(t)^{(1-\varepsilon+\alpha)q}w(t)\,dt
            \\
        & + \int_{0}^{\infty}\left(\int_{0}^{t}f^{p}V^{\varepsilon p}v\right)^{\frac{q}{p}}
          V(t)^{(1-\varepsilon+\alpha)q }w(t)\,dt\cdot V(\infty)^{-\alpha q}
            = I + II.
\end{align*}
If $V(\infty)=\infty$, one has $II=0$. Since
\begin{equation*}
    V(t)^{(1-\varepsilon+\alpha)q}\approx \int_{0}^{t}V^{(1-\varepsilon+\alpha)q-\frac{pq}{p-q}}d(V^{\frac{pq}{p-q}})\quad\text{for $t>0$}
\end{equation*}
and
\begin{equation*}
  V(t)^{-\alpha p}-V(\infty)^{-\alpha p} = \int_{t}^{\infty} d(-V^{-\alpha p})\quad\text{for $t>0$,}
\end{equation*}
monotonicity and Fubini's theorem yield
\begin{align*}
    I & \lesssim \int_{0}^{\infty}\left(\int_{t}^{\infty}\left(\int_{0}^{s}f^{p}V^{\varepsilon p}v\right) d(-V^{-\alpha p})(s)\right)^{\frac{q}{p}}
           \left(\int_{0}^{t}V^{(1-\varepsilon+\alpha)q-\frac{pq}{p-q}}\,dV^{\frac{pq}{p-q}}\right)w(t)\,dt
            \\
         & \lesssim
            \int_{0}^{\infty}\left(\int_{0}^{t}\left(\int_{s}^{\infty}\left(\int_{0}^{\tau}f^{p}V^{\varepsilon p}v\right)d(-V^{-\alpha p})(\tau)\right)^{\frac{q}{p}}
            V(s)^{(1-\varepsilon+\alpha)q-\frac{pq}{p-q}}dV^{\frac{pq}{p-q}}(s)\right)\,w(t)\,dt
            \\
        & =
            \int_{0}^{\infty}\left(\int_{s}^{\infty}\left(\int_{0}^{\tau}f^{p}V^{\varepsilon p}v\right)d(-V^{-\alpha p})(\tau)\right)^{\frac{q}{p}}
             V(s)^{(1-\varepsilon+\alpha)q-\frac{pq}{p-q}}W(s)dV^{\frac{pq}{p-q}}(s).
\end{align*}
Thus, owing to $A<\infty$, H\"older's inequality, and Fubini's theorem,
\begin{align*}
        I & \lesssim
            \left(\int_{0}^{\infty}W^{\frac{p}{p-q}}dV^{\frac{pq}{p-q}}\right)^{\frac{p-q}{p}}
            \left(\int_{0}^{\infty}\left(\int_{s}^{\infty}
            \left(\int_{0}^{\tau}f^{p}V^{\varepsilon p}v\right)d(-V^{-\alpha p})(\tau)\right)V(s)^{(1-\varepsilon+\alpha) p-\frac{p^2}{p-q}}dV^{\frac{pq}{p-q}}(s)\right)^{\frac{q}{p}}
            \\
        & \lesssim
            \left(\int_{0}^{\infty}
            \left(\int_{0}^{\tau}f^{p}V^{\varepsilon p}v\right)\left(\int_{0}^{\tau}V^{(1-\varepsilon+\alpha)p-\frac{p^2}{p-q}}dV^{\frac{pq}{p-q}}\right)d(-V^{-\alpha p})(\tau)\right)^{\frac{q}{p}}
            \\
        & \approx
            \left(\int_{0}^{\infty}
            \left(\int_{0}^{\tau}f^{p}V^{\varepsilon p}v\right)
             V(\tau)^{(\alpha-\varepsilon)p}d(-V^{-\alpha p})(\tau)\right)^{\frac{q}{p}}
          \\
        & =
            \left(\int_{0}^{\infty}
            f(y)^{p}V(y)^{\varepsilon p}v(y)
             \int_{y}^{\infty}V^{(\alpha-\varepsilon)p}d(-V^{-\alpha p})dy\right)^{\frac{q}{p}}
                \approx \left(\int_{0}^{\infty}f^{p}v\right)^{\frac{q}{p}}.
\end{align*}
If $V(\infty)<\infty$, we have
\begin{align*}
    II & \le \int_{0}^{\infty}\left(\int_{0}^{t}f^{p}v\right)^{\frac{q}{p}}
              V(t)^{(1+\alpha)q }w(t)dt\cdot V(\infty)^{-\alpha q}
        \le \left(\int_{0}^{\infty}f^{p}v\right)^{\frac{q}{p}} \left(\int_{0}^{\infty}
              V^{(1+\alpha)q }w\right) V(\infty)^{-\alpha q}.
\end{align*}
Owing to $A<\infty$, Fubini's theorem, and H\"older's inequality, we get
\begin{align*}
  \int_{0}^{\infty} & V^{(1+\alpha)q} w \approx \int_{0}^{\infty}\left(\int_0^t V^{\alpha q + q-p'}v^{1-p'}\right)  w(t)\,dt
      =\int_{0}^{\infty} V^{\alpha q + q-p'}v^{1-p'}W
        \\
    & \lesssim
    \left(\int_{0}^{\infty} V^{\alpha p-p'}v^{1-p'}\right) ^{\frac{q}{p}}
        \left(\int_{0}^{\infty}W(t)^{\frac{p}{p-q}}\,dV^{\frac{pq}{p-q}}\right)^{\frac{p-q}{p}}
       \lesssim V(\infty)^{\alpha q},
\end{align*}
establishing $II\lesssim \left(\int_{0}^{\infty}f^{p}v\right)^{\frac{q}{p}}$. This shows sufficiency.

\textit {Necessity.} Let $p\le q$ and assume that~\eqref{E:1} holds. Fix $t\in(0,\infty)$. Then
\begin{equation*}
    \int_{0}^{\infty}\left(\int_{0}^{s}f\right)^{q}w(s)\,ds
        \ge \int_{t}^{\infty}\left(\int_{0}^{s}f\right)^{q}w(s)\,ds
        \ge W(t)\left(\int_{0}^{t}f\right)^{q}.
\end{equation*}
Therefore,~\eqref{E:1} yields
\begin{equation}\label{E:lower-for-C}
    C\ge W(t)^{\frac{1}{q}}\sup_{f\ge0}\frac{\int_{0}^{t}f}{\left(\int_{0}^{\infty}f^{p}v\right)^{\frac{1}{p}}}.
\end{equation}
We claim that
\begin{equation}\label{E:evaluation-of-supremum}
    \sup_{f\ge0}\frac{\int_{0}^{t}f}{\left(\int_{0}^{\infty}f^{p}v\right)^{\frac{1}{p}}} = V(t).
\end{equation}
Indeed, if $p>1$, then we have, by H\"older's inequality,
\begin{equation*}
    \int_{0}^{t}f = \int_{0}^{t}f v^{\frac{1}{p}} v^{-\frac{1}{p}} \le \left(\int_{0}^{t}f^{p}v\right)^{\frac{1}{p}}\left(\int_{0}^{t}v^{-\frac{p'}{p}}\right)^{\frac{1}{p'}} \le \left(\int_{0}^{\infty}f^{p}v\right)^{\frac{1}{p}}V(t)
\end{equation*}
for every measurable $f\ge0$. On the other hand, this inequality is saturated by the choice $f=v^{1-p'}\chi_{(0,t)}$, since $f^pv=f$, and, consequently,
\begin{equation*}
    \int_{0}^{t}f = \left(\int_{0}^{t}f^{p}v\right)^{\frac{1}{p}}\left(\int_{0}^{t}f^{p}v\right)^{\frac{1}{p'}} = \left(\int_{0}^{\infty}f^{p}v\right)^{\frac{1}{p}}V(t).
\end{equation*}
If $p=1$, then we, once again, obtain
\begin{equation*}
    \int_{0}^{t}f = \int_{0}^{t}f v v^{-1} \le V(t) \int_{0}^{t}f v \le V(t) \int_{0}^{\infty}f v
\end{equation*}
for every measurable $f\ge0$. In order to saturate this inequality, fix any $\lambda< V(t)$. Then there exists a set $E\subset(0,t)$ of positive measure such that $\frac{1}{v}\ge\lambda$ on $E$. Set $f=\frac{\chi_{E}}{v}$. Then
\begin{equation*}
    \int_{0}^{t}f = \int_{E}\frac{1}{v} \ge \lambda|E| = \lambda\int_{0}^{\infty}fv.
\end{equation*}
On letting $\lambda\to V(t)_{-}$, we get
\begin{equation*}
    \int_{0}^{t}f \ge V(t)\int_{0}^{\infty}fv.
\end{equation*}
In any case,~\eqref{E:evaluation-of-supremum} follows. Since $t$ was arbitrary, plugging~\eqref{E:evaluation-of-supremum} into~\eqref{E:lower-for-C} yields
\begin{equation*}
    C\ge\sup_{t\in(0,\infty)}W(t)^{\frac{1}{q}}\sup_{f\ge0}\frac{\int_{0}^{t}f}{\left(\int_{0}^{\infty}f^{p}v\right)^{\frac{1}{p}}} = \sup_{t\in(0,\infty)}W(t)^{\frac{1}{q}}V(t),
\end{equation*}
establishing $A<\infty$.

Let $p>q$ and $p>1$, denote $r=\frac{pq}{p-q}$ and $B = \int_{0}^{\infty}V^{r}W^{\frac{r}{p}} w$. Let $\theta\in(\frac{r}{p'},\infty)$ and set
\begin{equation*}
    f(t) = \left(\int_{t}^{\infty}W^{\frac{r}{p}}wV^{r-\theta p'}\right)^{\frac{1}{p}} V(t)^{(\theta-1)(p'-1)}v(t)^{1-p'}\quad\text{for $t>0$.}
\end{equation*}
By Fubini's theorem,
\begin{align*}
    & \int_{0}^{\infty}f^{p}v = \int_{0}^{\infty} \left(\int_{t}^{\infty}W^{\frac{r}{p}}wV^{r-\theta p'}\right)V(t)^{(\theta-1)p'}v(t)^{1-p'}dt
                \\
        & = \int_{0}^{\infty} W(s)^{\frac{r}{p}}w(s)V(s)^{r-\theta p'}\left(\int_{0}^{s}V^{(\theta-1)p'}v^{1-p'}\right)ds \approx B.
\end{align*}
On the other hand, by monotonicity,
\begin{align*}
    & \int_{0}^{\infty}\left(\int_{0}^{t}f\right)^{q}w(t)dt
     \ge \int_{0}^{\infty}\left(\int_{0}^{t}V^{(\theta-1)(p'-1)}v^{1-p'}\right)^{q}\left(\int_{t}^{\infty}W^{\frac{r}{p}}wV^{r-\theta p'}\right)^{\frac{q}{p}}w(t)dt
            \\
    & \ge    \int_{0}^{\infty}\left(\int_{0}^{t}V^{(\theta-1)(p'-1)+\frac{r}{p}-\frac{\theta p'}{p}}v^{1-p'}\right)^{q}\left(\int_{t}^{\infty}W^{\frac{r}{p}}w\right)^{\frac{q}{p}}w(t)dt
    \approx B.
 \end{align*}
Altogether,~\eqref{E:1} implies $B^{\frac{1}{q}} \lesssim B^{\frac{1}{p}}$. Using a standard approximation argument, we obtain $B^{\frac{1}{r}}<\infty$, hence $B<\infty$. Since $A\approx B$ owing to integration by parts, we get $A<\infty$.

Finally, let $p=1$ and $p>q$. Fix some $\sigma>1$ and define
\begin{equation*}
    E_k = \{t\in(0,\infty) : \sigma^k<V(t)\le \sigma^{k+1}\} \quad \text{for $k\in\Z$.}
\end{equation*}
Set $\mathbb A = \{k\in\Z : E_k\neq\emptyset\}$. Then $(0,\infty)=\bigcup_{k\in\mathbb A}E_k$, in which the union is disjoint and each $E_k$ is a nondegenerate interval (which could be either open or closed at each end) with endpoints $a_k$ and $b_k$, $a_k<b_k$. For every $k\in\mathbb A$, we find $\delta_k > 0$ so that $a_k+\delta_k<b_k$ and
\begin{equation}\label{E:small-epsilons}
    \int_{a_k}^{b_k}W^{\frac{q}{1-q}}w \le \sigma \int_{a_k+\delta_k}^{b_k}W^{\frac{q}{1-q}}w,
\end{equation}
which is clearly possible, and then we define the set
\begin{equation*}
    G_k = \left\{t\in(a_k,a_k+\delta_k) : \frac{1}{v(t)} >\sigma^{k}\right\}.
\end{equation*}
Since $V$ is non-decreasing and left-continuous, $|G_k|>0$ for every $k\in\mathbb A$. Set $h = \sum_{k\in\mathbb A} \frac{\chi_{G_k}}{|G_k|}$. Then, for every $k\in\mathbb A$, one has
\begin{equation}\label{E:estimate-of-h}
    \int_{0}^{a_k+\delta_k}hv^{-1}V^{\frac{q}{1-q}} \ge \int_{a_k}^{a_k+\delta_k}hv^{-1}V^{\frac{q}{1-q}} = \frac{1}{|G_k|}\int_{G_k}V^{\frac{q}{1-q}}v^{-1}\ge \sigma^{\frac{k}{1-q}}.
\end{equation}

Fix $t\in(0,\infty)$. Then there is a uniquely defined $k\in\mathbb A$ such that $t\in(a_k,b_k]$. Consequently,
\begin{align*}
    \int_{0}^{t}hV^{\frac{q}{1-q}}& \le \sum_{j\in\mathbb A,\ j\le k}\frac{1}{|G_j|}\int_{G_j}V^{\frac{q}{1-q}}
       \le \sum_{j=-\infty}^{k}\sigma^{\frac{q(j+1)}{1-q}} = \frac{\sigma^{\frac{q(k+2)}{1-q}}}{\sigma^{\frac{q}{1-q}}-1}.
\end{align*}
On the other hand,
\begin{align*}
    \int_{0}^{t}\,dV^{\frac{q}{1-q}} \ge \int_{0}^{a_k}\,dV^{\frac{q}{1-q}} = V(a_k)^{\frac{q}{1-q}} \ge \sigma^{\frac{qk}{1-q}}.
\end{align*}
The last two estimates yield
\begin{equation}\label{E:upper-estimate-of-integral}
    \int_{0}^{t}hV^{\frac{q}{1-q}}\lesssim \int_{0}^{t}\,dV^{\frac{q}{1-q}} \quad\text{for $t>0$.}
\end{equation}
Since $W^{\frac{1}{1-q}}$ is non-increasing, we can apply Hardy's lemma (whose version for Lebesgue integrals can be found in~\cite[Chapter~2, Proposition~3.6]{Ben:88} - note that the proof presented there works verbatim for Lebesgue--Stieltjes integrals) to~\eqref{E:upper-estimate-of-integral} and get
\begin{equation}\label{E:hardy-lemma-estimate}
    \int_{0}^{\infty}hV^{\frac{q}{1-q}}W^{\frac{1}{1-q}}\lesssim \int_{0}^{\infty}W^{\frac{1}{1-q}}dV^{\frac{q}{1-q}}.
\end{equation}
Finally, using subsequently integration by parts, decomposition of $(0,\infty)$ into $\bigcup_{k\in\mathbb A}E_k$, the definition of $E_k$, the fact that each $E_k$ is an interval with endpoints $a_k$, $b_k$, \eqref{E:small-epsilons}, \eqref{E:estimate-of-h}, monotonicity of functions given by integrals, \eqref{E:1} applied to $p=1$ and $f=hv^{-1}V^{\frac{q}{1-q}}W^{\frac{1}{1-q}}$, and~\eqref{E:hardy-lemma-estimate}, we get
\begin{align*}
    & \int_{0}^{\infty}  W^{\frac{1}{1-q}}dV^{\frac{q}{1-q}}
        \le 2 \int_{0}^{\infty} V^{\frac{q}{1-q}} W^{\frac{q}{1-q}} w
        = 2 \sum_{k\in\mathbb A} \int_{E_k} V^{\frac{q}{1-q}} W^{\frac{q}{1-q}} w
        \lesssim  \sum_{k\in\mathbb A} \sigma^{\frac{(k+1)q}{1-q}} \int_{E_k} W^{\frac{q}{1-q}} w
            \\
    &
        \lesssim  \sum_{k\in\mathbb A} \sigma^{\frac{(k+1)q}{1-q}} \int_{a_k+\delta_k}^{b_k} W^{\frac{q}{1-q}} w
        \lesssim \sum_{k\in\mathbb A} \left(\int_{0}^{a_k+\delta_k} hv^{-1}V^{\frac{q}{1-q}}\right)^{q} \int_{a_k+\delta_k}^{b_k}W^{\frac{q}{1-q}} w
            \\
    &   \lesssim \sum_{k\in\mathbb A} \int_{a_k+\delta_k}^{b_k}\left(\int_{0}^{t} hv^{-1}V^{\frac{q}{1-q}}\right)^{q} W(t)^{\frac{q}{1-q}} w(t) \, dt
        \lesssim \int_{0}^{\infty}\left(\int_{0}^{t} hv^{-1}V^{\frac{q}{1-q}}W^{\frac{1}{1-q}}\right)^{q}w(t)\, dt
            \\
    &
       \lesssim \left(\int_{0}^{\infty}hV^{\frac{q}{1-q}}W^{\frac{1}{1-q}}\right)^{q}
        \lesssim \left(\int_{0}^{\infty}W^{\frac{1}{1-q}}\,dV^{\frac{q}{1-q}}\right)^{q},
\end{align*}
in which the multiplicative constants depend only on $C$ and $q$. This establishes, via a standard approximation argument, that $A^{1-q}<\infty$, which in turn yields $A<\infty$. The proof is complete.
\end{proof}



\

\end{document}